\documentclass[10pt]{article}
\usepackage{amsmath, mathtools, amssymb}
\usepackage{amsthm}
\usepackage{graphicx}
\usepackage{setspace}
\usepackage{hyperref}
\def\ds{\displaystyle}
\def\ni{\noindent}
\newcommand{\sinn}{\sum\limits_{n=-\infty}^{\infty}}
\newcommand{\szn}{\sum\limits_{n=0}^{\infty}}

\newcommand{\non}{\sum\limits_{n=1}^{\infty}}

\newcommand{\et}[1]{\eta({{#1}\tau)}}

\newcommand{\legendre}[2]{\genfrac{(}{)}{}{}{#1}{#2}}
\newtheorem{thm}{Theorem}

\numberwithin{thm}{section}
\numberwithin{cor}{section}
\numberwithin{dfn}{section}
\numberwithin{lemma}{section}

\title{\textbf{***}}
\date{}

\numberwithin{equation}{section}
\begin{document}
	\begin{center}
		\Large{\textbf{On Oliver's relations between infinite series and infinite products}}
	\end{center}
	
	\begin{center}
		{\textbf{K. R. Vasuki$^{1}$, Darshan D.$^{2}$, and Ravi G. N.$^{3}$ }} \vskip 0.1 cm \ni \small \vskip 0.1 cm \ni \small $^1 $$^{,}$$ ^{2} $$^{,}$Department of Studies in Mathematics, University of Mysore,\\
		Manasagangotri, Mysuru - 570 006, INDIA.\\
			$~~~~^1$E-mail: \textit{vasuki$\_$kr@hotmail.com }\vskip 0.1 cm \ni \small  $~~~~~~~~~^{2}$E-mail:  \textit{darshand13057@gmail.com} \\
		$ ^{3}$Department of Mathematics, Government College for Women,\\ Kolar - 563101, INDIA.\\
		\vskip 0.1 cm \ni \small  $~~~^{3}$E-mail:  \textit{gnravi1993@gmail.com}
		
	\end{center}
	
	\begin{abstract}
	The Jacobi triple product identity provides closed-form expressions for many infinite-product generating functions that arise naturally in combinatorics and number theory. A particularly important application is to Dedekind's eta function $\eta(\tau)$, which it expresses as a theta function. Using the theory of modular forms, Oliver \cite{oliver} classified all eta quotients that are theta functions. In this paper, we present elementary proofs of the identities established by Oliver.

	\end{abstract}
	
	\noindent \textbf{Keywords:} Theta functions, eta functions, Legendre symbol.  \\
	
	\noindent \textbf{MSC Classification 2020:} 11F20, 11F27.
	
	\section{Introduction}
	
	For $q=e^{2\pi i \tau}$, $Im(\tau)>0$, we define Dedekind's eta function $\eta(\tau)$ by 
	\begin{equation*}
		\eta(\tau)= q^{1/24}\prod\limits_{n=1}^{\infty}(1-q^n).
	\end{equation*}
	%It was introduced by Dedekind in 1877 \cite{}. 
	This function is useful, since many modular functions can be expressed using it and many combinatorial generating functions can be constructed from it. For example, let $r_k(n)$ be the number of representations of an integer $n$ as a sum of $k$ squares; then we have 
	\begin{equation*}
		\sum\limits_{n=0}^{\infty} r_k(n) q^n= \dfrac{\eta^{5k}(2\tau)}{\eta^{2k}(\tau)\eta^{2k}(4\tau)}.
	\end{equation*}
	
	Eta quotients have connections with many other areas of mathematics. For the connection with the partition function, see \cite{ono}. In \cite{oliver}, Oliver expressed the eta function as a certain kind of infinite series known as a theta function. Using the theory of modular forms, he classified all eta quotients which are theta functions. For example,
	
\begin{align*}
	\dfrac{\eta(\tau)\eta(32\tau)}{\eta(16\tau)} =\ds\sum\limits_{n=1}^{\infty}  \legendre{2}{n} q^{n^2}
	\intertext{and}
	\dfrac{\et{3}^2 \et{2}^2}{\et{6}} = \non \legendre{n}{3} n q^{n^2}.
\end{align*}

	 In \cite{oliver}, Oliver expressed 23 eta quotient identities in terms theta functions. The aim of this article is to give an alternative proof for all the identities established by Oliver. In Section 2, we give our proof. We close this section by recalling certain definitions and known results, required to prove our main results.
	 
	 For an odd prime $p$, Legendre's symbol is defined by 
	 \begin{equation*}
	 	\legendre{n}{p} = \begin{cases}
	 		0,~~~ \text{if}~ p \mid n\\
	 		1, ~~~\text{if $n$ is a quadratic residue modulo $p$}\\
	 		-1, ~\text{if $n$ is a non quadratic residue modulo $p$}
	 	\end{cases} .
	 \end{equation*}
	 The Kronecker's symbol $\legendre{n}{m}$ is defined by 
	\begin{equation*}
		\legendre{n}{m} = \begin{cases}
			1,~~~ \text{if}~ m=1\\
			0, ~~~\text{if $m$ is a prime dividing $n$}\\
			\text{Legendre symbol}, ~\text{if $m$ is an odd prime}
		\end{cases}.
	\end{equation*}
	 Also, we have
	 \begin{equation}\label{n/2}
	 	\legendre{n}{2} = \begin{cases}
	 			0,~~~ \text{if $n$ is even}\\
	 			1, ~~~\text{if $n$ is odd and}~ n \equiv \pm 1(\mathrm{mod}~ 8)\\
	 			-1, ~\text{if $n$ is odd and}~ n \equiv \pm 3(\mathrm{mod}~ 8)
	 	\end{cases}.
	 \end{equation}
	In general 
	\begin{equation*}
	\legendre{n}{m} = \prod\limits_{i=1}^{s}\legendre{n}{p_i},
	\end{equation*} where $m=\prod\limits_{i=1}^{s}p_i$ is a prime factorization of $m$. One can easily show that
	 \begin{gather*}
	 \legendre{a}{bc} = \legendre{a}{b} \legendre{a}{c} ~~\text{and} ~~ \legendre{ab}{c} = \legendre{a}{c} \legendre{b}{c}.
	 \end{gather*}
	For more details, one can refer to \cite[pp. 30--39, p. 77]{dickson} and \cite[pp. 509--532]{koshy}.

On page 197 of his second notebook \cite{Ramanujan}, S. Ramanujan defined his general theta function $f(a, b)$ as follows: 
	\begin{equation*}
		f(a,b): = \sum\limits_{n=-\infty}^{\infty} a^{\frac{n(n+1)}{2}} b^{\frac{n(n-1)}{2}},~ |ab|<1.
	\end{equation*}

	\noindent Ramanujan considered the three following special cases of $f(a,b)$ as follows :
	\begin{equation}\label{phi}
		\varphi(q): = f(q,q) = \sum\limits_{n=-\infty}^{\infty}q^{n^2}=(-q;q^2)^2_{\infty}(q^2;q^2)_{\infty},
	\end{equation}
	\begin{equation}\label{psi}
		\psi(q): = f(q,q^3) = \sum\limits_{n=0}^{\infty}q^{\frac{n(n+1)}{2}} = \frac{(q^2;q^2)_{\infty}}{(q;q^2)_{\infty}},
	\end{equation}
	and
	\begin{equation}\label{f}
		f(-q) := f(-q,-q^2) = \sum\limits_{n=-\infty}^{\infty}(-1)^nq^{\frac{n(3n+1)}{2}}=(q;q)_{\infty}.
	\end{equation}	
	
	\ni Ramanujan also defined
	\begin{equation*}
		\chi(-q): = (q;q^2)_{\infty}.
	\end{equation*}
	
Now, we shall recall the well-known relations between the above Ramanujan's theta functions and the eta function.

	\begin{align}
				\varphi(q) =& \dfrac{\et{2}^5}{\et{}^2\et{4}^2}, \label{eq1.1} \\
		\varphi(-q) =& \dfrac{\et{}^2}{\et{2}}, \label{eq1.2}\\
			q^{\frac{1}{8}}	\psi(q) =& \dfrac{\et{2}^2}{\et{}}, \label{eq1.3}\\
			q^{\frac{1}{8}}	\psi(-q) =& \dfrac{\et{}\et{4}}{\et{2}}, \label{eq1.4}\\
			q^{\frac{1}{24}} f(-q) =& \et{}, \label{eq1.5}\\
			\intertext{and}
			q^{\frac{1}{24}} f(q) =& \dfrac{\et{2}^3}{\et{}\et{4}}. \label{eq1.6}
		\end{align}

		The following known identities will be used in our proof.
				\begin{align}
			f^3(-q)=& \szn (-1)^n (2n+1) q^{\frac{n(n+1)}{2}}, \label{eq1.7}
		\end{align}
		\begin{align}
			qf(-q^{24}) =& \sinn (-1)^n q^{(6n+1)^2}, \label{eq1.8}
		\end{align}
		\begin{align}
			\varphi(q)- \varphi(q^9) =& 2q\dfrac{f^2(-q^6)f(-q^9)f(-q^{36})}{f(-q^3)f(-q^{12})f(-q^{18})}, \label{eq1.9}
		\end{align}
		\begin{align}
				  \psi(q)- q\psi(q^9)  =&q^{\frac{1}{8}} \dfrac{f(-q^6)f^2(-q^9)}{f(-q^3)f(-q^{18})}, \label{eq1.10}
				\end{align}
				\begin{align}
				3\varphi(q^9) - \varphi(q) =& 2\dfrac{f(-q)f(-q^4)f^2(-q^6)}{f(-q^2)f(-q^3)f(-q^{12})}, \label{eq1.11}\\
				 \psi(q)-3 q\psi(q^9)  =&  q^{\frac{1}{8}} \dfrac{f^2(-q)f(-q^6)}{f(-q^2)f(-q^3)}, \label{eq1.12}\\
			q^{\frac{1}{3}}	\dfrac{f^2(-q^4)f^2(-q)}{f(-q^2)} =& \sinn (3n+1) q^{3n^2+2n},\label{eq1.13}\\
				\intertext{and}
			q^{\frac{1}{24}}	\dfrac{f^5(-q)}{f^2(-q^2)} =& \sinn (6n+1) q^{\frac{3n^2+n}{2}}. \label{eq1.14}
	\end{align}
A proof of \eqref{eq1.7} can be found in \cite[p.~14]{spirit}. The identities \eqref{eq1.9} and \eqref{eq1.10} are recorded in \cite[p.~49, (i) and (ii)]{Berndt3}, respectively. The identities \eqref{eq1.13} and \eqref{eq1.14} are found in \cite[pp.~114--115, Entry 8(x) and (ix)]{Berndt3}, respectively, while the identity \eqref{eq1.12} appears in \cite[p.~349, Entry 2(ii)]{Berndt3}. The identity \eqref{eq1.8} can be obtained by changing $q$ to $q^{24}$ in \eqref{f}. The identity \eqref{eq1.11} is equivalent to \cite[p.~352 Entry 2(i)]{Berndt3}.

\section{Main results}
\begin{thm} We have
	\begin{align}
		\dfrac{\et{2}^5}{\et{}^2 \et{4}^2} &= \sinn q^{n^2}, \label{eq2.1}\\
		\dfrac{\et{8}\et{32}}{\et{16}} &= \non \legendre{2}{n} q^{n^2},\label{eq2.2}\\
		\dfrac{{\eta(16\tau)}^2}{\eta(8\tau)}&=\non\legendre{n}{2}^2 q^{n^2}, \label{eq2.3} \\
		\dfrac{\et{6}^2 \et{9} \et{36}}{\et{3}\et{12}\et{18}} &= \non \legendre{n}{3}^2 q^{n^2}, \label{eq2.4} \\
		\et{24} &= \non \legendre{12}{n} q^{n^2}, \label{eq2.5} \\ 
		\dfrac{\et{48}^3}{\et{24}\et{96}} &= \non \legendre{24}{n} q^{n^2}, \label{eq2.6} \\ 
		\dfrac{\et{48}\et{72}^2}{\et{24}\et{144}} &= \non \legendre{n}{6}^2 q^{n^2}, \label{eq2.7} \\
		\intertext{and}
		\dfrac{\et{24}\et{96}\et{144}^5}{\et{48}^2\et{72}^2 \et{288}} &= \non \legendre{18}{n} q^{n^2}. \label{eq2.8}
	\end{align}
\end{thm}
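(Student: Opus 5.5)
The plan is to prove each of the eight identities by rewriting the sum $\sum_{n\ge1}\chi(n)q^{n^2}$ (or $\sum_{n\in\mathbb Z}$) as a combination of Ramanujan's $\varphi$, $\psi$, $f$ at suitable arguments. Then I would apply one of \eqref{eq1.1}--\eqref{eq1.12} and convert to eta quotients with \eqref{eq1.1}--\eqref{eq1.6}. Throughout, $\chi(n)=\legendre{\cdot}{\cdot}$ is the symbol occurring in each identity. The outer factor $q$ and the $q^{1/24}$ normalizations of $\eta$ must match, which is a routine exponent check in each case.

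The simple cases come first.
\begin{itemize}
\item \eqref{eq2.1} is exactly \eqref{eq1.1}.
\item \eqref{eq2.3}: $\legendre{n}{2}^2=1$ exactly for odd $n$. Putting $n=2m+1$ gives $\sum q^{n^2}=q\,\psi(q^8)$, and \eqref{eq1.3} with $q\to q^8$ finishes it.
\item \eqref{eq2.2}: for odd $n$ one has $\legendre{2}{n}=(-1)^{(n^2-1)/8}$, and with $n=2m+1$, $(n^2-1)/8=m(m+1)/2$. The sum becomes $q\sum_{m\ge0}(-q^8)^{m(m+1)/2}=q\,\psi(-q^8)$, and \eqref{eq1.4} with $q\to q^8$ gives the eta quotient.
\item \eqref{eq2.4}: the sum runs over $3\nmid n$, so it equals $\tfrac12(\varphi(q)-\varphi(q^9))$. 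Identity \eqref{eq1.9} followed by \eqref{eq1.5} gives the result.
\item \eqref{eq2.5}: $\legendre{12}{n}$ is supported on $n\equiv\pm1\pmod 6$. Writing such $n$ as $|6k+1|$, $k\in\mathbb Z$, one checks $\legendre{12}{|6k+1|}=(-1)^k$, so the sum is $\sum_k(-1)^kq^{(6k+1)^2}=qf(-q^{24})$ by \eqref{eq1.8}. Then \eqref{eq1.5} applies.
\end{itemize}

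Next, \eqref{eq2.6} and \eqref{eq2.7} also have $n=|6k+1|$.
\begin{itemize}
\item \eqref{eq2.6}: I would expand $f(q)=f(q,q^2)$, i.e. replace $q\to -q$ in \eqref{f}, to get
\[ qf(q^{24})=\sum_k(-1)^{k+k(3k+1)/2}q^{(6k+1)^2}. \]
I then verify on residues of $k$ modulo $8$ that this sign equals $\legendre{24}{|6k+1|}$, since the symbol has period $24$ in $n$. After that, \eqref{eq1.6} with $q\to q^{24}$ gives the result.
\item \eqref{eq2.7}: the sum runs over odd $n$ with $3\nmid n$. It equals $q\psi(q^8)-q^9\psi(q^{72})=q\bigl(\psi(q^8)-q^8\psi(q^{72})\bigr)$, so \eqref{eq1.10} with $q\to q^8$ applies, and then \eqref{eq1.5}.
\end{itemize}

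For \eqref{eq2.8}, $\legendre{18}{n}=\legendre{2}{n}\legendre{9}{n}$ is $\legendre{2}{n}$ when $3\nmid n$ and $0$ otherwise. Starting from the full sum $q\psi(-q^8)$ of \eqref{eq2.2}, I subtract the terms with $n=3m$. Since $\legendre{2}{3m}=-\legendre{2}{m}$, these terms total $-q^9\psi(-q^{72})$. Hence the sum is $q\bigl(\psi(-q^8)+q^8\psi(-q^{72})\bigr)$, which is \eqref{eq1.10} with $q$ replaced by $-q^8$. Under $q\to-q$ in \eqref{eq1.10}, the factors $f(-q^6)$ and $f(-q^{18})$ are unchanged, while $f^2(-q^9)\to f^2(q^9)$ and $f(-q^3)\to f(q^3)$. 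The resulting $f(q^{72})$ and $f(q^{24})$ are then converted with \eqref{eq1.6}. This produces $\eta(48\tau)^{-2}\eta(96\tau)^{-1}\eta(24\tau)\eta(144\tau)^{6}\eta(72\tau)^{-2}\eta(288\tau)^{-2}\cdot\eta(144\tau)^{\ast}$-type bookkeeping, which should collapse to the stated quotient.

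The main obstacle I expect is this last step together with the sign check in \eqref{eq2.6}. The substitution $q\to-q$ interacts badly with the fractional prefactor $q^{1/8}$ written in \eqref{eq1.10}. I would first restate \eqref{eq1.10} with the power $q^{1/8}$ absorbed, i.e. as an identity of power series in $q$, before substituting. Then I would carefully recount the eta exponents and the leading power of $q$ on both sides.
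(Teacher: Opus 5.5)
Your treatment of \eqref{eq2.1}--\eqref{eq2.7} matches the paper's proof. You use the same residue-class dissections, leading to $\varphi(q)$, $q\psi(q^8)$, $q\psi(-q^8)$, $\tfrac12\bigl(\varphi(q)-\varphi(q^9)\bigr)$, $qf(-q^{24})$, $qf(q^{24})$ and $q\psi(q^8)-q^9\psi(q^{72})$. These are followed by \eqref{eq1.8}, \eqref{eq1.9}, \eqref{eq1.10} and the eta conversions \eqref{eq1.1}--\eqref{eq1.6}.

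Your worry about the $q^{1/8}$ in \eqref{eq1.10} is unfounded. That factor is a misprint, since the left side equals $1$ at $q=0$. The correct identity is $\psi(q)-q\psi(q^9)=f(-q^6)f^2(-q^9)/\bigl(f(-q^3)f(-q^{18})\bigr)$, and any substitution into it is harmless, including $q\to-q^8$.

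The gap is in \eqref{eq2.8}, which the paper does not prove at all; its proof stops after \eqref{eq2.7}. Your reduction to $q\bigl(\psi(-q^8)+q^8\psi(-q^{72})\bigr)$ via $\legendre{2}{3m}=-\legendre{2}{m}$ is correct. So is your use of \eqref{eq1.10} at $q\to-q^8$. However, your final claim that the bookkeeping ``should collapse to the stated quotient'' is false. Carrying it out with $f(q)=f^3(-q^2)/\bigl(f(-q)f(-q^4)\bigr)$ gives
\[
\sum_{n\ge1}\legendre{18}{n}q^{n^2}=q\,\frac{f(-q^{48})f^2(q^{72})}{f(q^{24})f(-q^{144})}=\frac{\eta(24\tau)\eta(96\tau)\eta(144\tau)^5}{\eta(48\tau)^2\eta(72\tau)^2\eta(288\tau)^2}.
\]
So $\eta(96\tau)$ appears with exponent $+1$, not $-1$ as in your list. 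Also $\eta(288\tau)$ appears with exponent $-2$, as your own list already indicates.

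The quotient printed in \eqref{eq2.8}, with a single $\eta(288\tau)$ in the denominator, cannot equal the series. Its expansion begins with $q^{13}$, since $(24+96+720-96-144-288)/24=13$. Its exponents sum to $2$, giving weight $1$. By contrast, $\sum_{n\ge1}\legendre{18}{n}q^{n^2}$ begins with $q$ and has weight $\tfrac12$.

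You need to finish the computation rather than assert agreement with the printed form. The conclusion should be that the identity holds with $\eta(288\tau)^2$ in the denominator, i.e.\ that \eqref{eq2.8} as stated contains a misprint.
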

\begin{proof}[Proof of \eqref{eq2.2}] Consider \begin{equation}\label{eq2.9}
	\ds\non \legendre{2}{n}q^{n^2} = \sum\limits_{k=1}^{8}\szn \legendre{2}{8n+k} q^{(8n+k)^2}.
	\end{equation}
	From \cite[p.~492]{koshy}, we have
	 \begin{equation*}
		\legendre{2}{n} = \begin{cases}
			0,~~~ \text{if $n$ is even}\\
			1, ~~~\text{if $n$ is odd and}~ n \equiv \pm 1(\mathrm{mod}~ 8)\\
			-1, ~\text{if $n$ is odd and}~ n \equiv \pm 3(\mathrm{mod}~ 8)
		\end{cases}.
	\end{equation*}
	Using the above definition in \eqref{eq2.9}, we obtain 
	\begin{equation*}
		\non \legendre{2}{n} q^{n^2}=\szn q^{(8n+1)^2} - \szn q^{(8n+3)^2} - \szn q^{(8n+5)^2} + \szn q^{(8n+7)^2}.
	\end{equation*}
	The above equation is equivalent to
	\begin{align*}
		\non \legendre{2}{n} q^{n^2}& = \szn (-1)^{\frac{n(n+1)}{2}} q^{(2n+1)^2}\\
		&=q \szn  (-1)^{\frac{n(n+1)}{2}} \left(q^8 \right)^{\frac{n(n+1)}{2}} .
	\end{align*}
Replacing $q$ by $-q$ in \eqref{psi} and then changing $q$ to $q^8$, employing the resulting identity in the above, we find that
	\begin{equation*}
	\non \legendre{2}{n} q^{n^2} = q\psi(-q^8).
	\end{equation*}
Expressing  $\psi(-q^8)$ in terms of eta quotients, we complete the proof of \eqref{eq2.2}.\\
	
\ni	\textit{Proof of \eqref{eq2.3}.}
Employing \eqref{n/2} in the right hand side of \eqref{eq2.3}, we obtain
\begin{align*}
	\non \legendre{n}{2}^2 q^{n^2} &= \szn q^{(2n+1)^2}\\
	&= q \szn   \left(q^8 \right)^{\frac{n(n+1)}{2}}.
\end{align*}
Changing $q$ to $q^8$ in \eqref{psi}, then expressing the resulting identity in terms of eta quotients in the above equation, we obtain the required identity.\\

\ni	\textit{Proof of \eqref{eq2.4}.} We have
\begin{equation}\label{n/3}
	\legendre{n}{3} = \begin{cases}
		1, ~~~~\text{if}~n\equiv 1 (\textrm{mod}~3),\\
		-1, ~~\text{if}~n\equiv 2(\textrm{mod}~3),\\
		0, ~~\text{if}~n\equiv 0 (\textrm{mod}~3).
	\end{cases}
\end{equation}
Using the above in the right hand side of \eqref{eq2.4}, we have
\begin{equation*}
	\non \legendre{n}{3}^2 q^{n^2} = \szn q^{(3n+1)^2} + \szn q^{(3n+2)^2}.
\end{equation*}
The above equation implies
\begin{equation*}
	\non \legendre{n}{3}^2 q^{n^2}= \non q^{n^2} - \non q^{(3n)^2}.
\end{equation*}
Employing the definition of $\varphi(q)$ in the above, we obtain
\begin{equation*}
\non \legendre{n}{3}^2 q^{n^2} = \dfrac{\varphi(q)-\varphi(q^9)}{2}.
\end{equation*}
Employing \eqref{eq1.9} in the above, and expressing the resulting identity in terms of eta functions, we obtain the required identity.\\

\ni	\textit{Proof of \eqref{eq2.5}.} By the definition of $\legendre{a}{n}$, we have
\begin{equation*}
	\legendre{12}{n} = \begin{cases}
		1, ~~~\text{if}~ n\equiv 1, 11 (\mathrm{mod~12}),\\
		-1, ~\text{if}~ n\equiv 5, 7 (\mathrm{mod~12}),\\
		0, ~~~\text{if} ~gcd(12,n)>1.
	\end{cases}
\end{equation*}
Using the above in the right hand side of \eqref{eq2.5}, we obtain
\begin{equation*}
	\non \legendre{12}{n} q^{n^2} = \szn q^{(12n+1)^2} - \szn q^{(12n+5)^2} -\szn q^{(12n+7)^2}+\szn q^{(12n+11)^2}.
\end{equation*}
The above identity is equivalent to 
\begin{equation*}
	\non \legendre{12}{n} q^{n^2} = \szn (-1)^n q^{(6n+1)^2} - \szn (-1)^n q^{(6n+5)^2}.
\end{equation*}
The above identity can also be written as
\begin{align*}
\non \legendre{12}{n} q^{n^2} = \sinn (-1)^n q^{(6n+1)^2}.\\ 
\end{align*}
Employing \eqref{eq1.8} in the above, and then expressing the resulting identity in terms of eta functions, we complete the proof of \eqref{eq2.5}.\\

\ni	\textit{Proof of \eqref{eq2.6}.} By the definition of $\legendre{a}{n}$, we have
\begin{equation*}
	\legendre{24}{n} = \begin{cases}
		1, ~~~\text{if}~ n\equiv \pm1~\text{or}~ \pm 5 (\mathrm{mod~24}),\\
		-1, ~\text{if}~ n\equiv \pm 7 ~\text{or}~\pm 11 (\mathrm{mod~24}),\\
		0, ~~~\text{if} ~gcd(24,n)>1.
	\end{cases}
\end{equation*}
Using the above in the right hand side of \eqref{eq2.6}, we obtain
\begin{equation*}
	\non \legendre{24}{n} q^{n^2} = \sum\limits_{k\in\left\lbrace 1, 5, 19,23\right\rbrace }^{} \szn q^{(24n+k)^2} - \sum\limits_{k\in\left\lbrace 7, 11, 13, 17\right\rbrace }^{} \szn q^{(24n+k)^2}.
\end{equation*}
The above identities can be rewritten as
\begin{equation*}
	\non \legendre{24}{n} q^{n^2} = \sinn q^{(24n+1)^2}- \sinn q^{(24n+7)^2} - \sinn q^{(24n+13)^2} + \sinn q^{(24n+19)^2}.
\end{equation*}
The above identity is equivalent to 
\begin{align*}
	\non \legendre{24}{n} q^{n^2} &=\sinn(-1)^{\frac{n(n+1)}{2}} q^{(6n+1)^2} \\&=q \sinn (-1)^{\frac{n(n+1)}{2}}\left( q^{24}\right) ^{\frac{n(3n+1)}{2}}.
\end{align*}
Changing $q$ to $-q$ in \eqref{f}, and then replacing $q$ by $q^{24}$ and then employing the resulting identity in the above, we obtain
\begin{align*}
	\non \legendre{24}{n} q^{n^2} =q f(q^{24}).
\end{align*}
Expressing $f(q^{24})$ in terms of eta quotients using \eqref{eq1.6}, we obtain the required result.\\

\ni	\textit{Proof of \eqref{eq2.7}.} We have
\begin{equation}\label{n/6}
	\legendre{n}{6} = \begin{cases}
		1, ~~~~\text{if}~n\equiv 1 (\textrm{mod}~6),\\
		-1, ~~\text{if}~n\equiv 5(\textrm{mod}~6),\\
		0, ~~\text{if}~ gcd(n,6)>1.
	\end{cases}
\end{equation}
Using the above in right hand side of \eqref{eq2.7}, we obtain
\begin{align*}
	\non \legendre{n}{6}^2 q^{n^2} = \szn q^{(6n+1)^2} + \szn q^{(6n+5)^2}.
\end{align*}
The above identity is equivalent to 
\begin{align*}
	\non \legendre{n}{6}^2 q^{n^2} &= \szn q^{(2n+1)^2} - \szn q^{(6n+3)^2}\\ &= q \szn   \left(q^8 \right)^{\frac{n(n+1)}{2}}- q^9 \szn   \left(q^{72} \right)^{\frac{n(n+1)}{2}}.
\end{align*}
Which is equivalent to 
\begin{equation*}
\non \legendre{n}{6}^2 q^{n^2} = q\psi(q^8)-q^9 \psi(q^{72}).
\end{equation*}
Changing $q$ to $q^8$ in \eqref{eq1.10} and employing the resulting identity in terms of eta functions in the above, we obtain the required result.\\
The identity \eqref{eqthm2.21} is equivalent to \eqref{phi}.
\end{proof}

		\begin{thm} We have
			\begin{align}
				\et{8}^3 &= \non \legendre{-4}{n} n q^{n^2},\label{eqthm2.21} \\
				\dfrac{\et{16}^9}{\et{8}^3 \et{32}^3}&= \non \legendre{-2}{n} n q^{n^2}, \label{eqthm2.22}\\
					\dfrac{\et{3}^2\et{12}^2}{ \et{6}}&= \non \legendre{n}{3} n q^{n^2}, \label{eqthm2.23}\\
						\dfrac{\et{48}^{13}}{\et{24}^5 \et{96}^5}&= \non \legendre{-6}{n} n q^{n^2},\label{eqthm2.24}\\
						\intertext{and}
						\dfrac{\et{24}^5}{\et{48}^2} &= \non \legendre{n}{12} n q^{n^2}. \label{eqthm2.25}
			\end{align}
		\end{thm}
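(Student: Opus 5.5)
The plan is the same one used for the previous theorem. For each identity, split the sum over $n$ into residue classes modulo $4$, $8$, $3$, $12$ or $24$, as the character requires. The surviving terms are then folded into a single bilateral or unilateral sum of the form $\sum \epsilon(m)(am+b)q^{(am+b)^2}$. Finally this sum is matched with one of the known series \eqref{eq1.7}, \eqref{eq1.13}, \eqref{eq1.14}, after suitable substitutions $q\to -q$ and $q\to q^k$. The results are converted to eta quotients with \eqref{eq1.5}, \eqref{eq1.6}, and the identity $f(q)=f^3(-q^2)/(f(-q)f(-q^4))$ that follows from them.

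For \eqref{eqthm2.21}, $\legendre{-4}{n}$ vanishes for even $n$ and equals $(-1)^m$ for $n=2m+1$. Since $(2m+1)^2=8\cdot\frac{m(m+1)}{2}+1$, the sum equals $q\sum_{m\ge0}(-1)^m(2m+1)q^{8m(m+1)/2}=qf^3(-q^8)$ by \eqref{eq1.7}, and \eqref{eq1.5} then gives $\et{8}^3$.

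For \eqref{eqthm2.22}, $\legendre{-2}{n}$ is $+1$ for $n\equiv1,3 \pmod 8$ and $-1$ for $n\equiv5,7 \pmod 8$. With $n=2m+1$ this is the sign $(-1)^{m(m-1)/2}$. Replacing $q$ by $-q$ in \eqref{eq1.7} produces the sign $(-1)^{m+m(m+1)/2}=(-1)^{m(m-1)/2}$. Hence the sum is $qf^3(q^8)$. Writing $Q=q^8$, we have $f^3(Q)=f^9(-Q^2)/(f^3(-Q)f^3(-Q^4))$, which is exactly the stated quotient.

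For \eqref{eqthm2.23}, take $n=3m+1$ and $n=3m+2$ with $m\ge0$. Writing $3m+2=-(3(-m-1)+1)$ absorbs the minus sign from $\legendre{n}{3}$, so the sum becomes $\sinn(3n+1)q^{(3n+1)^2}$. Since $(3n+1)^2=3(3n^2+2n)+1$, this is $\eqref{eq1.13}$ with $q\to q^3$, which gives $f^2(-q^{12})f^2(-q^3)/f(-q^6)$ times the correct power of $q$, that is, $\et{3}^2\et{12}^2/\et{6}$.

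For \eqref{eqthm2.24} and \eqref{eqthm2.25}, $n$ runs over integers prime to $6$. Write these as $\pm(6m+1)$ with $m\in\mathbb{Z}$. The identity $(6m+1)^2=24\cdot\frac{3m^2+m}{2}+1$ points to \eqref{eq1.14} with $q\to q^{24}$.

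For \eqref{eqthm2.25}, $\legendre{n}{12}=\legendre{n}{3}$ on integers prime to $6$. The factor $n\legendre{n}{3}$ is even in $n$, so the sum collapses to $\sinn(6n+1)q^{(6n+1)^2}$. This equals $qf^5(-q^{24})/f^2(-q^{48})=\et{24}^5/\et{48}^2$.

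For \eqref{eqthm2.24}, replace $q$ by $-q$ in \eqref{eq1.14}, giving the sign $(-1)^{m(3m+1)/2}$, and then $q\to q^{24}$. The quotient $f^5(Q)/f^2(-Q^2)$ with $Q=q^{24}$ becomes $f^{13}(-Q^2)/(f^5(-Q)f^5(-Q^4))$, which matches the stated quotient. The main obstacle is the sign bookkeeping here. One must check that $\legendre{-6}{n}$, evaluated on the classes $n\equiv 1,5,7,11,13,17,19,23 \pmod{24}$ and combined with the folding $n=\pm(6m+1)$, agrees with $(-1)^{m(3m+1)/2}$. I would verify this directly by tabulating $m \bmod 4$ against the corresponding residues of $6m+1$ and $-(6m+1)$ modulo $24$.
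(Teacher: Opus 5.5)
Your proposal is correct and follows the paper's proof essentially step for step: split by residue classes, fold into a single sum, then apply \eqref{eq1.7}, \eqref{eq1.13} or \eqref{eq1.14} with $q\to -q$ and $q\to q^k$. The one check you defer for \eqref{eqthm2.24} goes through: $\legendre{-6}{n}=+1$ for $n\equiv 1,5,7,11$ and $-1$ for $n\equiv 13,17,19,23 \pmod{24}$, which matches $(-1)^{m(3m+1)/2}$ (signs $+,+,-,-$ for $m\equiv 0,1,2,3 \pmod 4$) on both $6m+1$ and $-(6m+1)$ because $n\legendre{-6}{n}$ is even in $n$. Incidentally, your sign $(-1)^{m(m-1)/2}$ in \eqref{eqthm2.22} is the right one; the paper's intermediate display $(-1)^{n(n+1)/2}$ is the pattern of $\legendre{2}{n}$, although its final substitution $q\to -q$ in \eqref{eq1.7} yields the same $qf^3(q^8)$ as yours.
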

		\begin{proof}[Proof of \eqref{eqthm2.21}]
		By the definition of $\legendre{a}{n}$, we have \begin{equation*}
			\legendre{-4}{n} = \begin{cases}
				0, ~~~~~~~\text{if}~ n \equiv 0(\mathrm{mod~2}),\\
				(-1)^k,~\text{if}~ n \equiv 1(\mathrm{mod~2}), \text{where}~ n=2k+1.
			\end{cases}
		\end{equation*}
	Using the above in the right-hand side of \eqref{eqthm2.21}, we obtain
		\begin{align*}
			\non\legendre{-4}{n} n q^{n^2}&= \szn (-1)^n (2n+1) q^{(2n+1)^2}\\
			&= q \szn (-1)^n (2n+1)   \left(q^8 \right)^{\frac{n(n+1)}{2}}.
		\end{align*}
		Changing $q$ to $q^8$ in \eqref{eq1.7} and then employing the resulting identity in terms of eta functions in the above, we complete the proof of \eqref{eqthm2.21}.\\
		
	\ni	\textit{Proof of \eqref{eqthm2.22}.} By the definition of $\legendre{a}{n}$, we have
	\begin{equation*}
		\legendre{-2}{n} = \begin{cases}
		1, ~~~\text{if}~ n \equiv 1 ~\text{or}~ 3(\mathrm{mod~8}),\\
		-1,~\text{if}~ n \equiv 5 ~\text{or}~7(\mathrm{mod~2}),\\ 
		0,~~~ \text{if }~ n~ \text{is even}.
	\end{cases}
	\end{equation*}
	Employing the above in the right-hand side of \eqref{eqthm2.22}, we have
	\begin{align*}
		\non \legendre{-2}{n} n q^{n^2} = &\szn (8n+1) q^{(8n+1)^2} + \szn (8n+3) q^{(8n+3)^2} \\&- \szn (8n+5) q^{(8n+5)^2} - \szn (8n+7) q^{(8n+7)^2}.
	\end{align*}
	The above identity is equivalent to 
	\begin{align*}
		\non \legendre{-2}{n} n q^{n^2}= &	\szn (-1)^{\frac{n(n+1)}{2}}(2n+1) q^{(2n+1)^2}\\
		&=q \szn  (-1)^{\frac{n(n+1)}{2}} (2n+1)\left(q^8 \right)^{\frac{n(n+1)}{2}} .
	\end{align*}
Replacing $q$ by $-q$ in \eqref{eq1.7} and then changing $q$ to $q^8$ and employing the resulting identity in terms of eta functions in the above, we obtain the required result.\\

		\ni	\textit{Proof of \eqref{eqthm2.23}.} Employing \eqref{n/3} in the right-hand side of \eqref{eqthm2.23}, we obtain
		\begin{align*}
			\non \legendre{n}{3} n q^{n^2}	&= \szn (3n+1)q^{(3n+1)^2} -\szn (3n+2)q^{(3n+2)^2} \\
			&= \sinn (3n+1)q^{(3n+1)^2}\\
			&= q\sinn (3n+1)\left( q^3\right) ^{(3n^2+2n)}. 
		\end{align*}
		Changing $q$ to $q^3$ in \eqref{eq1.13} and employing the resulting identity in terms of eta functions in the above,  we obtain the required result.\\
		
			\ni	\textit{Proof of \eqref{eqthm2.24}.} 	By the definition of $\legendre{a}{n}$, we have \begin{equation*}
				\legendre{-6}{n} = \begin{cases}
					1, ~~~\text{if}~ n \equiv 1, 5, 7 ~\text{or}~ 11(\mathrm{mod~24}),\\
					-1,~\text{if}~ n \equiv 13, 17, 19 ~\text{or}~23(\mathrm{mod~2}),\\ 
					0,~~~ \text{if }~ gcd(n,6)>1.
				\end{cases}
			\end{equation*}
			Using the above in the right-hand side of \eqref{eqthm2.24}, we have
			\begin{align*}
				\non \legendre{-6}{n} n q^{n^2}& = \sum\limits_{\substack{k=1\\k\neq 2, 5}}^{6} \szn (24n+2k-1) q^{(24n+2k-1)^2}\\& - \sum\limits_{\substack{k=7\\k\neq 8, 11}}^{12} \szn (24n+2k-1) q^{(24n+2k-1)^2}.				
			\end{align*}
			The above equation implies
			\begin{align*}
			\non \legendre{-6}{n} n q^{n^2}& = \sinn (24n+1) q^{(24n+1)^2} + \sinn (24n+7) q^{(24n+7)^2} \\ &- \sinn (24n+13) q^{(24n+13)^2} - \sinn (24n+19) q^{(24n+19)^2}.
			\end{align*}
			Which is equivalent to 
			\begin{align*}
			\non \legendre{-6}{n} n q^{n^2} &=\sinn (-1)^{\frac{n(3n+1)}{2}}(6n+1) q^{(6n+1)^2}\\
			&=q \sinn  (-1)^{\frac{n(3n+1)}{2}} (6n+1)\left(q^{12} \right)^{3n^2+n} .
			\end{align*}
			Replacing $q$ by $-q$ in \eqref{eq1.14} and then changing $q$ to $q^{24}$ and employing the resulting identity in terms of eta functions in the above, we obtain the required result.\\
			
			\ni	\textit{Proof of \eqref{eqthm2.25}.} 	By the definition of $\legendre{a}{n}$, we have
			\begin{equation}\label{n/12}
				\legendre{n}{12} = \begin{cases}
					1, ~~~\text{if}~ n\equiv 1(\mathrm{mod~6})\\
						-1, ~\text{if}~ n\equiv 5(\mathrm{mod~6})
				\end{cases}.
			\end{equation}
			Employing the above definition in the right-hand side of \eqref{eqthm2.25}, we obtain 
			\begin{align*}
				\non \legendre{n}{12} n q^{n^2}&=  \szn (6n+1) q^{(6n+1)^2} -  \szn (6n+5) q^{(6n+5)^2}\\ &=  \sinn (6n+1) q^{(6n+1)^2}\\
				&= q  \sinn (6n+1)\left(  q^{12}\right) ^{(3n^2+n)}.
			\end{align*}
			Employing \eqref{eq1.14} in terms of eta functions in the above, we obtain the required result.
		\end{proof}
		\begin{thm} We have
			\begin{align}
			\dfrac{\et{}^2}{\et{2}}	 &= \sinn \left( 1-2\legendre{n}{2}^2\right) q^{n^2}=\sinn(-1)^{n}q^{n^2},\label{eqthm2.31}\\	
			\dfrac{\et{}\et{4}\et{6}^2}{\et{2}\et{3}\et{12}}&=\sinn \left( 1-\dfrac{3}{2}\legendre{n}{3}^2\right) q^{n^2}, \label{eqthm2.32}\\		
			\dfrac{\et{2}^2\et{3}}{\et{}\et{6}}&= \sinn\left(1-2\legendre{n}{2}^2-\dfrac{3}{2}\legendre{n}{3}^2+3\legendre{n}{6}^2 \right) q^{n^2}, \label{eqthm2.33}\\
				\dfrac{\et{8}^5}{\et{4}^2\et{16}^2}&= \sinn\left( 1-\legendre{n}{2}^2\right)q^{n^2},\label{eqthm2.34} \\
				\dfrac{\et{9}^2}{\et{18}}&= \sinn\left( 1-2\legendre{n}{2}-\legendre{n}{3}^2+2\legendre{n}{6}^2\right)q^{n^2},\label{eqthm2.35} \\
				\dfrac{\et{18}^5}{\et{9}^2\et{36}^2}&=\sinn\left( 1-\legendre{n}{3}^2\right) q^{n^2}, \label{eqthm2.36}\\
				\dfrac{\et{4}\et{16}\et{24}^2}{\et{8}\et{12}\et{48}}&= \sinn\left( 1-\legendre{n}{2}^2-\dfrac{3}{2}\legendre{n}{3}^2+\dfrac{3}{2}\legendre{n}{6}^2\right) q^{n^2}, \label{eqthm2.37}\\
				\dfrac{\et{72}^5}{\et{36}^2\et{144}^2}&= \sinn \left(1-\legendre{n}{2}^2-\legendre{n}{3}^2+\legendre{n}{6}^2 \right) q^{n^2},\label{eqthm2.38}\\
				\dfrac{\et{3}\et{18}^2}{\et{6}\et{9}} &= \non\left(2\legendre{n}{6}^2 - \legendre{n}{3}^2 \right) q^{n^2}, \label{eqthm2.39}\\
				\intertext{and}
				\dfrac{\et{8}^2\et{48}}{\et{16}\et{24}} &= \non\left(3\legendre{n}{6}^2-2\legendre{n}{2}^2 \right) q^{n^2}. \label{eqthm2.310}
			\end{align}
		\end{thm}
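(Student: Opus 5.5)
The plan is the same as for the first theorem: rewrite each right-hand side as a short linear combination of $\varphi$ (or $\psi$) at various arguments, and then quote \eqref{eq1.1}, \eqref{eq1.2}, \eqref{eq1.9}, \eqref{eq1.11} or \eqref{eq1.12}. The key observation is that every coefficient is built from three indicators, namely
\[
\legendre{n}{2}^2=[2\nmid n],\qquad \legendre{n}{3}^2=[3\nmid n],\qquad \legendre{n}{6}^2=[\gcd(n,6)=1].
\]
Hence, by inclusion--exclusion, the bilateral sums are
\begin{align*}
\sinn \legendre{n}{2}^2q^{n^2}&=\varphi(q)-\varphi(q^4),\qquad
\sinn \legendre{n}{3}^2q^{n^2}=\varphi(q)-\varphi(q^9),\\
\sinn \legendre{n}{6}^2q^{n^2}&=\varphi(q)-\varphi(q^4)-\varphi(q^9)+\varphi(q^{36}).
\end{align*}
I also use $\varphi(-q)=2\varphi(q^4)-\varphi(q)$.

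First I collect the coefficients of $\varphi(q),\varphi(q^4),\varphi(q^9),\varphi(q^{36})$ in each identity.
\begin{itemize}
\item For \eqref{eqthm2.31} this gives $2\varphi(q^4)-\varphi(q)=\varphi(-q)$, and then \eqref{eq1.2} finishes it.
\item For \eqref{eqthm2.34}, \eqref{eqthm2.36} and \eqref{eqthm2.38} all coefficients cancel except one. This leaves $\varphi(q^4)$, $\varphi(q^9)$ and $\varphi(q^{36})$ respectively, and \eqref{eq1.1} with $q$ replaced by $q^4,q^9,q^{36}$ finishes each.
\item For \eqref{eqthm2.32} one gets $\tfrac12(3\varphi(q^9)-\varphi(q))$, which is \eqref{eq1.11}.
\item For \eqref{eqthm2.37} one gets $\tfrac12(3\varphi(q^{36})-\varphi(q^4))$, which is \eqref{eq1.11} with $q\to q^4$.
\item For \eqref{eqthm2.33} the coefficients are $\tfrac12,-1,-\tfrac32,3$. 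Using $\varphi(-q)=2\varphi(q^4)-\varphi(q)$ and its $q^9$ analogue, this equals $\tfrac12(3\varphi(-q^9)-\varphi(-q))$. So one applies \eqref{eq1.11} with $q\to -q$.
\item For \eqref{eqthm2.35} I read the term $2\legendre{n}{2}$ as $2\legendre{n}{2}^2$; otherwise the stated coefficients do not give the left side. With that reading the sum collapses to $2\varphi(q^{36})-\varphi(q^9)=\varphi(-q^9)$, and \eqref{eq1.2} with $q\to q^9$ finishes it.
\end{itemize}

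The two one-sided sums are handled using evenness in $n$. For \eqref{eqthm2.39}, the bilateral version of $2\legendre{n}{6}^2-\legendre{n}{3}^2$ equals $\varphi(q)-2\varphi(q^4)-\varphi(q^9)+2\varphi(q^{36})=\varphi(-q^9)-\varphi(-q)$. The $n=0$ term is zero, so the sum over $n\ge1$ is $\tfrac12(\varphi(-q^9)-\varphi(-q))$, and this is \eqref{eq1.9} with $q\to-q$. For \eqref{eqthm2.310} it is cleaner to restrict to odd $n$, since $3\legendre{n}{6}^2-2\legendre{n}{2}^2=[2\nmid n](1-3[3\mid n])$. As in the proof of \eqref{eq2.7}, this gives $q\psi(q^8)-3q^9\psi(q^{72})$, which is \eqref{eq1.12} with $q\to q^8$.

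The last step in each case is translating the $f$-products into eta quotients via \eqref{eq1.1}--\eqref{eq1.6}, and this is where I expect the main obstacle. The cases where $q$ is replaced by $-q$ in \eqref{eq1.9} and \eqref{eq1.11} need care: factors $f(-q^{\mathrm{odd}})$ become $f(q^{\mathrm{odd}})$ and must be rewritten with \eqref{eq1.6}, while the even-index factors are unchanged. Then the accumulated powers of $q$ must be checked to match exactly the $q^{1/24}$ normalisations of the eta functions on the left-hand side.
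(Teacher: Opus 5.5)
Your proposal is correct and is essentially the paper's proof. The paper splits each sum by residues modulo $6$ (or $2$, $3$), whereas you use inclusion--exclusion on the indicators together with $\varphi(-q)=2\varphi(q^4)-\varphi(q)$. Both routes arrive at the same combinations $\varphi(-q)$, $\tfrac12(3\varphi(q^9)-\varphi(q))$, $\tfrac12(3\varphi(-q^9)-\varphi(-q))$, $\varphi(q^4)$, $\varphi(-q^9)$, $\varphi(q^9)$, $\tfrac12(3\varphi(q^{36})-\varphi(q^4))$, $\varphi(q^{36})$, $\tfrac12(\varphi(-q^9)-\varphi(-q))$ and $q\psi(q^8)-3q^9\psi(q^{72})$, and invoke the same identities.

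Your reading of \eqref{eqthm2.35} with $2\legendre{n}{2}^2$ is what the paper's proof actually uses. For \eqref{eqthm2.310}, \eqref{eq1.12} is the right identity, not \eqref{eq1.10} as the paper cites. For your final $q$-power check, note that the printed \eqref{eq1.10} and \eqref{eq1.12} carry a spurious factor $q^{1/8}$ on the right (both left sides begin with $1$); drop it and the exponents balance.
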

		\begin{proof}[Proof of \eqref{eqthm2.31}]
		Employing \eqref{n/2} in the right-hand side of \eqref{eqthm2.31}, we obtain 
		\begin{align*}
		\sinn \left( 1-2\legendre{n}{2}^2\right) q^{n^2} &= \sinn q^{(2n)^2} - \sinn q^{(2n+1)^2}\\ &= \sinn(-1)^n q^{n^2}.
		\end{align*}
		Expressing $\varphi(-q)$ in terms of eta quotients in the above, we obtain the required result.\\
		
			\ni	\textit{Proof of \eqref{eqthm2.32}.} Employing \eqref{n/3} in the right-hand side of \eqref{eqthm2.32}, we obtain
			\begin{align*}
				\sinn \left( 1-\dfrac{3}{2}\legendre{n}{3}^2\right) q^{n^2} &= -\frac{1}{2}\sinn q^{(3n+1)^2} -\frac{1}{2}\sinn q^{(3n+2)^2} + \sinn q^{(3n)^2} \\
				&= -\frac{1}{2}\sinn q^{n^2} + \frac{3}{2}\sinn q^{(3n)^2}.
			\end{align*}
			Which implies
			\begin{equation*}
				\sinn \left( 1-\dfrac{3}{2}\legendre{n}{3}^2\right) q^{n^2} = \dfrac{-\varphi(q)+3\varphi(q^9)}{2}.
			\end{equation*}
			Employing \eqref{eq1.11} in terms of eta functions in the above, we obtain the required result.\\
			
				\ni	\textit{Proof of \eqref{eqthm2.33}.} Employing \eqref{n/2}, \eqref{n/3} and \eqref{n/6} in the right-hand side of \eqref{eqthm2.33}, we find that 
				\begin{align*}
					\sinn&\left(1-2\legendre{n}{2}^2-\dfrac{3}{2}\legendre{n}{3}^2+3\legendre{n}{6}^2 \right) q^{n^2} \\&= \frac{1}{2}\sinn q^{(6n+1)^2} - \frac{1}{2}\sinn q^{(6n+2)^2} -\sinn q^{(6n+3)^2} \\&- \frac{1}{2}\sinn q^{(6n+4)^2} + \frac{1}{2}\sinn q^{(6n+5 )^2} + \sinn q^{(6n)^2}\\
					&=-\frac{1}{2}\sinn(-1)^n q^{n^2} + \frac{3}{2}\sinn(-1)^n q^{(3n)^2}.
				\end{align*}
				Which implies
				\begin{align*}
				\sinn&\left(1-2\legendre{n}{2}^2-\dfrac{3}{2}\legendre{n}{3}^2+3\legendre{n}{6}^2 \right) q^{n^2} = \dfrac{-\varphi(-q)+3\varphi(-q^9)}{2}.
				\end{align*}
				Changing $q$ to $-q$ in \eqref{eq1.11} and then employing the resulting identity in terms of eta functions in the above, we complete the proof of \eqref{eqthm2.33}.\\
				
				\ni	\textit{Proof of \eqref{eqthm2.34}.} Employing \eqref{n/2} in the right-hand side of \eqref{eqthm2.34}, we obtain
				\begin{align*}
					\sinn\left( 1-\legendre{n}{2}^2\right)q^{n^2} &= \sinn q^{(2n)^2}\\ &= \varphi(q^4).
				\end{align*}
				Expressing $\varphi(q^4)$ in terms of eta quotients, the result follows.\\
				
				\ni	\textit{Proof of \eqref{eqthm2.35}.} Employing \eqref{n/2}, \eqref{n/3} and \eqref{n/6} in the right-hand side of \eqref{eqthm2.35}, we find that 
				\begin{align*}
				\sinn\left( 1-2\legendre{n}{2}-\legendre{n}{3}^2+2\legendre{n}{6}^2\right)q^{n^2}& = -\sinn q^{(6n+3)^2} + \sinn q^{(6n)^2}\\
				&= \sinn (-1)^n q^{(3n)^2}\\
				&=\varphi(-q^9).
				\end{align*}
				Expressing $\varphi(-q^9)$ in terms of eta quotients, the result follows.\\
				
					\ni	\textit{Proof of \eqref{eqthm2.36}.} Employing \eqref{n/3} in the right-hand side of \eqref{eqthm2.36}, we obtain
					\begin{align*}
						\sinn\left( 1-\legendre{n}{3}^2\right) q^{n^2} &= \sinn q^{(3n)^2}\\ 
						&= \varphi(q^9).
					\end{align*}
					Expressing $\varphi(q^9)$ in terms of eta quotients, the result follows.\\
					
						\ni	\textit{Proof of \eqref{eqthm2.37}.} Employing \eqref{n/2}, \eqref{n/3} and \eqref{n/6} in the right-hand side of \eqref{eqthm2.37}, we find that
			\begin{align*}
			\sinn&\left( 1-\legendre{n}{2}^2-\dfrac{3}{2}\legendre{n}{3}^2+\dfrac{3}{2}\legendre{n}{6}^2\right) q^{n^2}\\& = -\frac{1}{2} \sinn q^{(6n+2)^2} - \frac{1}{2} \sinn q^{(6n+4)^2} + \sinn q^{(6n)^2}\\ 
			&= -\frac{1}{2} \sinn q^{(2n)^2} + \frac{3}{2} \sinn q^{(6n)^2}.
				\end{align*} 
				The above implies
				\begin{equation*}
				\sinn\left( 1-\legendre{n}{2}^2-\dfrac{3}{2}\legendre{n}{3}^2+\dfrac{3}{2}\legendre{n}{6}^2\right) q^{n^2} = \dfrac{-\varphi(q^4)+3\varphi(q^{36})}{2}.
				\end{equation*}
				Changing $q$ to $q^4$ in \eqref{eq1.11} and then using the resulting identity in terms of eta functions in the above, we obtain the required result.\\
				
			\ni	\textit{Proof of \eqref{eqthm2.38}.} Employing \eqref{n/2}, \eqref{n/3} and \eqref{n/6} in the right-hand side of \eqref{eqthm2.38}, we find that
			\begin{align*}
				\sinn \left(1-\legendre{n}{2}^2-\legendre{n}{3}^2+\legendre{n}{6}^2 \right) q^{n^2} &= \sinn q^{(6n)^2}\\
				&= \varphi(q^{36}).
			\end{align*}
		Expressing $\varphi(q^{36})$ in terms of eta quotients, the result follows.\\
			
			\ni	\textit{Proof of \eqref{eqthm2.39}.} Employing \eqref{n/3} and \eqref{n/6} in the right-hand side of \eqref{eqthm2.39}, we obtain
			\begin{align*}
			\non\left(2\legendre{n}{6}^2 - \legendre{n}{3}^2 \right) q^{n^2} &= \szn q^{(6n+1)^2} - \szn q^{(6n+2)^2} - \szn q^{(6n+4)^2} + \szn q^{(6n+5)^2}\\
			&=-\szn (-1)^n q^{n^2} - \szn  q^{(6n+3)^2} + \szn q^{(6n)^2}\\
			&= -\szn (-1)^n q^{n^2} + \szn (-1)^n q^{(3n)^2}.
			\end{align*}
			Which implies
			\begin{align*}
					\non\left(2\legendre{n}{6}^2 - \legendre{n}{3}^2 \right) q^{n^2} = \dfrac{-\varphi(-q) + \varphi(-q^9)}{2}.
			\end{align*}
			Changing $q$ to $-q$ in \eqref{eq1.9} and then employing the resulting identity in terms of eta functions in the above, we obtain the required result.\\
			
			\ni	\textit{Proof of \eqref{eqthm2.310}.} Employing \eqref{n/2} and \eqref{n/6} in the right-hand side of \eqref{eqthm2.310}, we obtain
			\begin{align*}
				\non\left(3\legendre{n}{6}^2-2\legendre{n}{2}^2 \right) q^{n^2} &= \szn q^{(6n+1)^2} - 2\szn q^{(6n+3)^2} + \szn q^{(6n+5)^2}\\
				&= \szn q^{(2n+1)^2} - 3 \szn \left(q^9 \right)^{(2n+1)^2} \\
		&	=q \szn   \left(q^8 \right)^{\frac{n(n+1)}{2}}-3 q^9 \szn   \left(q^{72} \right)^{\frac{n(n+1)}{2}}.
			\end{align*}
			Which is equivalent to 
			\begin{equation*}
				\non\left(3\legendre{n}{6}^2-2\legendre{n}{2}^2 \right) q^{n^2} = q\psi(q^8)- 3 q^9 \psi(q^{72}).
			\end{equation*}
			Changing $q$ to $q^8$ in \eqref{eq1.10} and then employing the resulting identity in the above, we obtain the required result.
		\end{proof}
		
		\begin{thm} We have
			\begin{equation}\label{eqthm2.41}
				\dfrac{\et{6}^5}{\et{3}^2} = \non \left(2\legendre{n}{12} - \legendre{n}{3} \right) n q^{n^2}.
			\end{equation}
				\end{thm}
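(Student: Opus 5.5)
The plan is to collapse the sum, via a parity shift, into the single bilateral series $\sinn (-1)^k(3k+1)q^{(3k+1)^2}$, and then obtain it from \eqref{eq1.13} with $q$ replaced by $-q^3$.

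\textbf{Step 1: the coefficient.} We first evaluate $c(n)=2\legendre{n}{12}-\legendre{n}{3}$ on residues modulo $6$, using \eqref{n/12} and \eqref{n/3}:
\begin{itemize}
\item for $n\equiv 1 \pmod 6$, $c(n)=2-1=1$;
\item for $n\equiv 2 \pmod 6$, $c(n)=0-(-1)=1$;
\item for $n\equiv 4 \pmod 6$, $c(n)=0-1=-1$;
\item for $n\equiv 5 \pmod 6$, $c(n)=-2+1=-1$;
\item for $n\equiv 0,3 \pmod 6$, $c(n)=0$.
\end{itemize}
Hence the right-hand side of \eqref{eqthm2.41} equals
\begin{equation*}
\sum_{k\in\{1,2\}}\szn (6n+k)q^{(6n+k)^2}-\sum_{k\in\{4,5\}}\szn (6n+k)q^{(6n+k)^2}.
\end{equation*}

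\textbf{Step 2: folding into a bilateral sum.} As in the proofs of \eqref{eqthm2.23} and \eqref{eqthm2.25}, we replace $n$ by $-n$ in the terms with $n\equiv 2,5 \pmod 6$. Then $-n\equiv 4,1\pmod 6$, and the weight $c(n)\,n$ becomes $-c(n)\,n=c(n)\cdot(-n)$ with $q^{n^2}$ unchanged. All surviving terms therefore have index $\equiv 1 \pmod 3$, taken over all integers, with sign $+$ when the index is $\equiv 1\pmod 6$ and $-$ when it is $\equiv 4 \pmod 6$. Writing the index as $3k+1$, we expect
\begin{equation*}
\non\left(2\legendre{n}{12}-\legendre{n}{3}\right)nq^{n^2}=\sinn (-1)^n(3n+1)q^{(3n+1)^2}=q\sinn (-1)^n(3n+1)\left(q^3\right)^{3n^2+2n}.
\end{equation*}

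\textbf{Step 3: applying \eqref{eq1.13}.} Since $(-1)^n=(-1)^{3n^2+2n}$, the last sum is $\sinn (3n+1)x^{3n^2+2n}$ with $x=-q^3$. We therefore replace $q$ by $-q$ in \eqref{eq1.13}, which turns $f^2(-q)$ into $f^2(q)$ and leaves $f(-q^2)$ and $f(-q^4)$ unchanged. We then replace $q$ by $q^3$, obtaining
\begin{equation*}
q\,\dfrac{f^2(q^3)f^2(-q^{12})}{f(-q^6)}.
\end{equation*}
We need to check that the fractional power $q^{1/3}$ in \eqref{eq1.13} is consistent under $q\to -q$; at the level of the series identity it simply cancels against the overall power of $q$.

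\textbf{Step 4: conversion to eta quotients.} Using \eqref{eq1.6} for $f(q^3)$ and \eqref{eq1.5} for $f(-q^6)$ and $f(-q^{12})$, the product becomes
\begin{equation*}
\dfrac{\et{6}^6}{\et{3}^2\et{12}^2}\cdot\dfrac{\et{12}^2}{\et{6}}=\dfrac{\et{6}^5}{\et{3}^2}.
\end{equation*}
The remaining check is that the powers of $q$ match. These are $q^{1/4}$ from $f^2(q^3)$, $q$ from $f^2(-q^{12})$, and $q^{-1/4}$ from $f(-q^6)$, which multiply to exactly the leading $q$.

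\textbf{Main obstacle.} The only delicate step is the sign and residue bookkeeping in Step 2, that is, correctly combining the four residue classes into one alternating series. Everything else is a direct substitution into \eqref{eq1.13}.
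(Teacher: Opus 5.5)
Your proof is correct and follows the paper's argument step for step: you reduce the coefficient modulo $6$, fold the classes $n\equiv 2,5 \pmod 6$ onto negative indices to obtain $\sum_{n=-\infty}^{\infty}(-1)^n(3n+1)q^{(3n+1)^2}$, and then apply \eqref{eq1.13} with $q\to -q$ followed by $q\to q^3$. The loose end you flagged is harmless but is not a cancellation: the factor $q^{1/3}$ printed in \eqref{eq1.13} is a misprint (without it the product has constant term $1$, matching the $n=0$ term of the series), so \eqref{eq1.13} is a genuine power-series identity, $q\to -q$ is legitimate, and your exponent count in Step 4 confirms exactly this reading.
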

			\begin{proof}[Proof of \eqref{eqthm2.41}.] Employing \eqref{n/3} and \eqref{n/12} in the right-hand side of \eqref{eqthm2.41}, we obtain
				\begin{align*}
					\non \left(2\legendre{n}{12} - \legendre{n}{3} \right) n q^{n^2} =& \szn (6n+1) q^{(6n+1)^2} + \szn (6n+2) q^{(6n+2)^2}\\ & - \szn (6n+4) q^{(6n+4)^2}- \szn (6n+5) q^{(6n+5)^2}\\
					=& \szn (-1)^n (3n+1) q^{(3n+1)^2} + \szn (-1)^n (3n+2) q^{(3n+2)^2}.
				\end{align*}
				Which is equivalent to
				\begin{align*}
						\non \left(2\legendre{n}{12} - \legendre{n}{3} \right) n q^{n^2} &=  \sinn (-1)^n (3n+1) q^{(3n+1)^2}\\
						&= q\sinn (-1)^n (3n+1) \left(q^3 \right) ^{3n^2+2n}.
				\end{align*}
			Replacing $q$ by $-q$ in \eqref{eq1.13} and then changing $q$ to $q^3$ and employing the resulting identity in terms of eta functions in the above, we obtain the required result.
			\end{proof}
			
			\begin{thm}
				We have
				\begin{align}
					\dfrac{\et{6}\et{9}^2}{\et{3}\et{18}} &= \szn \legendre{n+2}{3}^2 q^{\frac{n(n+1)}{2}} \label{eqthm2.51}\\
					\intertext{and}
					\dfrac{\et{}^2\et{6}}{\et{2}\et{3}} &= \szn \left[  3\legendre{n+2}{3}^2-2\right]  q^{\frac{n(n+1)}{2}}. \label{eqthm2.52}
				\end{align}
			\end{thm}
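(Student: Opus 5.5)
The plan is to follow the same pattern as the proofs of \eqref{eq2.4} and \eqref{eq2.7}: reduce the right-hand side to a linear combination of $\psi(q)$ and $q\psi(q^9)$, then apply the known dissection identities \eqref{eq1.10} and \eqref{eq1.12}. By \eqref{n/3}, $\legendre{n+2}{3}^2$ equals $0$ when $n\equiv 1 \pmod 3$ and equals $1$ otherwise. So the first step is to split $\szn$ according to the residue of $n$ modulo $3$.

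For $n=3m+1$ the exponent becomes
\begin{equation*}
\frac{(3m+1)(3m+2)}{2}=9\cdot\frac{m(m+1)}{2}+1 .
\end{equation*}
Hence, by \eqref{psi},
\begin{equation*}
\szn_{n\equiv 1 \ (\mathrm{mod}\ 3)} q^{\frac{n(n+1)}{2}} \;=\; \sum\limits_{m=0}^{\infty} q^{\frac{(3m+1)(3m+2)}{2}} \;=\; q\,\psi(q^9).
\end{equation*}
Consequently
\begin{equation*}
\szn \legendre{n+2}{3}^2 q^{\frac{n(n+1)}{2}}=\psi(q)-q\psi(q^9).
\end{equation*}
For \eqref{eqthm2.52}, the coefficient $3\legendre{n+2}{3}^2-2$ equals $1$ when $n\not\equiv 1 \pmod 3$ and $-2$ when $n\equiv 1 \pmod 3$. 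The same splitting then gives
\begin{equation*}
\szn \left[3\legendre{n+2}{3}^2-2\right] q^{\frac{n(n+1)}{2}}=\psi(q)-3q\psi(q^9).
\end{equation*}

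The next step is to invoke \eqref{eq1.10} for the first sum and \eqref{eq1.12} for the second. Then I would rewrite each $f(-q^k)$ as $q^{-k/24}\eta(k\tau)$ using \eqref{eq1.5}. The resulting eta quotients are exactly $\eta(6\tau)\eta(9\tau)^2/(\eta(3\tau)\eta(18\tau))$ and $\eta(\tau)^2\eta(6\tau)/(\eta(2\tau)\eta(3\tau))$.

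The only delicate point I expect is the bookkeeping of the fractional powers of $q$. In both quotients the total weight of $q$ coming from the eta factors is $\tfrac{3}{24}=\tfrac18$:
\begin{equation*}
\frac{6+18-3-18}{24}=\frac18 \quad\text{and}\quad \frac{2+6-2-3}{24}=\frac18 .
\end{equation*}
This matches the factor $q^{1/8}$ that appears in \eqref{eq1.10} and \eqref{eq1.12}, which is really the normalization that converts the $f$-quotient into the eta quotient. So the identities hold with the understanding (as in \eqref{eq1.3}) that the theta series on the right is paired with $q^{1/8}$. I would state this normalization explicitly when matching exponents, so that the two sides agree as power series.
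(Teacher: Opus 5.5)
Your proof is correct and follows the paper's route exactly: splitting by $n \bmod 3$ gives $\psi(q)-q\psi(q^9)$ and $\psi(q)-3q\psi(q^9)$, and the paper then applies \eqref{eq1.10} and \eqref{eq1.12} and converts to eta functions, as you propose. Your normalization remark is right and worth stating explicitly: the two theta combinations equal the $f$-quotients exactly, so the $q^{1/8}$ written in \eqref{eq1.10} and \eqref{eq1.12} belongs with the series, and the identities should read eta quotient $=q^{1/8}\sum_{n\ge 0}(\cdots)\,q^{n(n+1)/2}$ — the paper's statement omits this factor.
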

			\begin{proof}[Proof of \eqref{eqthm2.51} ]
			By employing \eqref{n/3} in the right-hand side of \eqref{eqthm2.51}, we obtain
			\begin{align*}
				\szn \legendre{n+2}{3}^2 q^{\frac{n(n+1)}{2}} &=\szn q^{\frac{(3n)(3n+1)}{2}}+ \szn q^{\frac{(3n+2)(3n+3)}{2}} \\
				&=\szn q^{\frac{n(n+1)}{2}} - \szn q^{\frac{(3n+1)(3n+2)}{2}}\\
				& = \szn q^{\frac{n(n+1)}{2}} - q \szn \left(q^9 \right)^{\frac{n(n+1)}{2}}.
			\end{align*}
			Which is equivalent to 
			\begin{equation*}
					\szn \legendre{n+2}{3}^2 q^{\frac{n(n+1)}{2}} = \psi(q)-q\psi(q^9).
			\end{equation*}
			Employing \eqref{eq1.10} in terms of eta functions in the above, we obtain the required identity.\\
			
			\ni	\textit{Proof of \eqref{eqthm2.52}.} Employing \eqref{n/3} in the right-hand side of the above, we obtain
			\begin{align*}
			\szn \left[  3\legendre{n+2}{3}^2-2\right]  q^{\frac{n(n+1)}{2}} =& \szn q^{\frac{(3n)(3n+1)}{2}} -2\szn q^{\frac{(3n+1)(3n+2)}{2}} \\ & + \szn q^{\frac{(3n+2)(3n+3)}{2}}\\
			= & \szn  q^{\frac{n(n+1)}{2}} - 3q\szn \left(q^9 \right)^{\frac{n(n+1)}{2}}\\
			=&\psi(q) - 3q\psi(q^9).
			\end{align*} 
			Employing \eqref{eq1.12} in terms of eta functions in the above, we obtain the required result.
			\end{proof}

		\noindent \textbf{Acknowledgment:}\\
		
		The second author is supported by grant No.NSFDC/E-81088(Ref. No 231610177110)  by the funding agency NSFDC, INDIA, under UGC-NFSC-JRF.
		% The third author is supported by grant  No.09/0119(20939)/2025-EMR-I (Ref. No.Dec-23(i)/EU-V) by the funding agency CSIR, India, under CSIR-JRF. 
		The author is  grateful to his funding agency.\\

	\end{document}